\begin{document}

\theoremstyle{plain}
\newtheorem{theorem}{Theorem}
\newtheorem{corollary}[theorem]{Corollary}
\newtheorem{lemma}[theorem]{Lemma}
\newtheorem{proposition}[theorem]{Proposition}
\theoremstyle{definition}
\newtheorem{definition}[theorem]{Definition}
\newtheorem{example}[theorem]{Example}
\newtheorem{conjecture}[theorem]{Conjecture}
\theoremstyle{remark}
\newtheorem{remark}[theorem]{Remark}

\newcommand{\seqnum}[1]{\href{https://oeis.org/#1}{\rm \underline{#1}}}

\begin{center}
\vskip 1cm{\LARGE\bf 
The Gini Index of an Integer Partition}
\vskip 1cm
\large
Grant Kopitzke \\
Department of Mathematics\\
         University of Wisconsin, Milwaukee\\
         Milwaukee, WI 53211\\
	 USA \\
\href{mailto:kopitzk3@uwm.edu}{\tt kopitzk3@uwm.edu} \\
\end{center}

\vskip .2 in

\begin{abstract}
The Gini index is a number that attempts to measure how equitably a resource is distributed throughout a population, and is commonly used in economics as a measurement of inequality of wealth or income. The Gini index is often defined as the area between the Lorenz curve of a distribution and the line of equality, normalized to be between zero and one. In this fashion, we define a Gini index on the set of integer partitions and show that it is closely related to the second elementary symmetric polynomial, and the dominance order on partitions. We conclude with a generating function for the Gini index, and discuss how it can be used to find lower bounds on the width of the dominance lattice. 
\end{abstract}

\section{Introduction}
\label{intro}
In part one of his 1912 book ``Variabilit\`a e Mutabilit\`a" (Variability and Mutability), the statistician Corrado Gini formulated a number of different summary statistics; among which was what is now known as the \textit{Gini index} --- a measure that quantifies how equitably a resource is distributed throughout a population. Referring to ``the" Gini index can be misleading, as no fewer than thirteen formulations of his famous index appeared in the original publication \cite{Origins:1}. Since then, many others have appeared in a variety of different fields.

The Gini index is usually defined using the \textit{Lorenz curve}. In ``Methods of Measuring the Concentration of Wealth", Lorenz defined this curve in the following fashion. Consider a population of people amongst whom is distributed some fixed amount of wealth.
Let ${L(x)}$ be the percentage of total wealth possessed by the poorest $x$ percent of the population. The graph ${y=L(x)}$ is the Lorenz curve of the population \cite{Lorenz:1}.

It is clear from this definition that $L(0)=0$ (i.e., none of the people have none of the wealth), ${L(1)=1}$ (all of the people have all of the wealth), and $L$ is non-decreasing. Since any population of people must have finite size $n$, the function ${L(x)}$ as defined above would appear to be a discrete function on the set ${\{\frac{k}{n}:k\in \mathbb{Z}\text{ and }0\leq k \leq n\}}$. However, in practice $L$ is often made continuous on ${[0,1]}$ by linear interpolation \cite{Farris:1}.

If each person possesses the same amount of wealth, then the Lorenz curve for this distribution is the line $y=x$, which we call the ``line of equality". The area between the line of equality and the Lorenz curve of a wealth distribution provides a measurement of the wealth inequality in that population. \begin{figure}[htp]
    \centering
    \includegraphics[width=10cm]{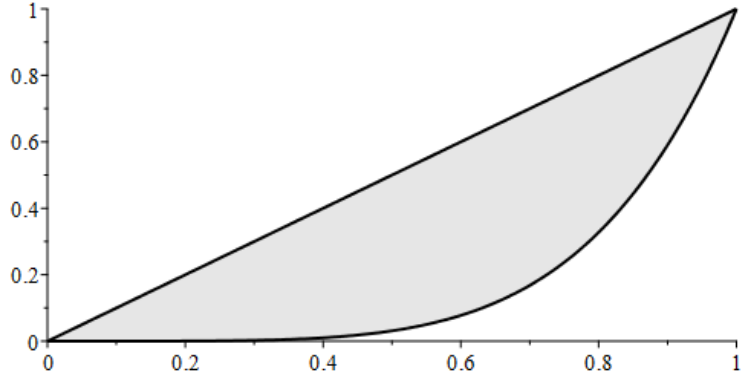}
    \caption{Area between the line of equality and a typical Lorenz curve}
\end{figure}

The maximum possible area of ${\frac{1}{2}}$ arises from the distribution in which one person controls all of the wealth (${L(1)=1}$, and ${L(x)=0}$ for all ${x\neq1}$). The Gini index of a distribution is then defined by calculating the area between the line of equality and Lorenz curve of the distribution, and normalizing this area to be between zero and one.
\[G=2\int_{0}^{1}{\big(x-L(x)\big)dx}\]

In this paper we consider distributions of a discrete indivisible resource in a finite population, where the amount of that resource is equal to the number of people in the population. There is a natural one-to-one correspondence between the set of such distributions with $n$ people, and the set of partitions of $n$. We will then define the Gini index of a integer partition in a similar fashion as above; for a comparable Gini index defined on a partition of a set, see \cite{Simovici:1}.

\section{Preliminaries}
\label{prelim}
\subsection{Partitions and Young diagrams}
A \textit{partition}, $\lambda$, of a positive integer $n$ (sometimes written as ${\lambda \vdash n}$) is a sequence ${(\lambda_1 , \lambda_2 ,\ldots, \lambda_{\ell})}$ of ${\ell\leq n}$ non-increasing positive integers such that ${\sum_{i=1}^{\ell}{\lambda_{i}}=n}$. The ${\lambda_i}$ (${1\leq i \leq \ell}$) are called the ``parts of $\lambda$''. To avoid repeating parts, it is sometimes useful to write a partition as $(\lambda_1^{a_1},\lambda_2^{a_2},\ldots,\lambda_{\ell}^{a_{\ell}})$ to represent $\lambda_i$ repeating $a_i$ times. In this case, we have that $\sum_{i=1}^{\ell}{{a_i}\lambda_i}=n$, and $\lambda_i\neq \lambda_j$ for all $i \neq j$. This notation will be used in the proof of Proposition~\ref{prop3}. In order to make the length of $\lambda$ (the number of parts) equal to $n$, one can ``pad out'' the partition by adding ${n-\ell}$ zeros to the end. For example, the partition ${(4,3,1,1)}$ of 9 is equivalent to ${(4,3,1,1,0,0,0,0,0).}$ This technique will be used when defining the Lorenz curve of a partition.

A \textit{Young diagram} is a finite collection of boxes arranged in left-justified rows, with a weakly decreasing number of boxes in each row \cite{Fulton:1}. Integer partitions are in one-to-one correspondence with Young diagrams in the following way: if ${\lambda = (\lambda_1 , \lambda_2 ,\ldots, \lambda_{\ell})}$ is a partition of $n$ then the Young diagram of shape ${\lambda}$ has ${\lambda_1}$ boxes in its first row, ${\lambda_2}$ boxes in its second row, etc. For example, if ${\lambda = (4,3,1,1)}$, then the Young diagram of shape $\lambda$ is
\[\yng(4,3,1,1)\,.\]

The \textit{conjugate partition} $\widetilde{\lambda}$ of $\lambda$ is the partition of $n$ obtained by reflecting the Young diagram of $\lambda$ across its main diagonal. As in the previous example, if ${\lambda = (4,3,1,1)}$, then the Young diagram of $\widetilde{\lambda}$ is 
\[\yng(4,2,2,1)\,,\]
hence ${\widetilde{\lambda}=(4,2,2,1)}$. Conjugation is clearly a bijection on the partitions of $n$.

The \textit{dominance order} is a partial order on the set of partitions of $n$.  If $\lambda=(\lambda_1,\lambda_2,\ldots,\lambda_n)$ and $\mu=(\mu_1,\mu_2,\ldots,\mu_n)$ are partitions of $n$, then $\mu \preceq \lambda$ if 
\[\sum_{i=1}^{k}{\mu_i} \leq \sum_{i=1}^{k}{\lambda_i} \]
for all $k\geq 1$. Conjugation of partitions is an antiautomorphism on the dominance lattice of partitions of $n$ \cite{Brylawski:1}. In other words, if $\mu \preceq \lambda$, then $\widetilde{\lambda}\preceq\widetilde{\mu}$. We will write $\mu \prec \lambda$ if $\mu \preceq \lambda$ and $\mu \neq \lambda$, and will let $P_n$ denote the partially ordered set of partitions of $n$ with respect to dominance.

For a fixed positive integer $n$, an $\textit{antichain}$ in $P_n$ is a subset of $P_n$ in which all partitions are pairwise incomparable. A $\textit{maximum antichain}$ is an antichain of maximal cardinality. The length of the maximum antichain is also known as the $\textit{width of the lattice}$. The width of $P_n$ \seqnum{A076269} is currently an open problem.

\subsection{The second elementary symmetric polynomial}
The \textit{second elementary symmetric polynomial}, $e_2$, in $n$ variables, ${x_1,\,x_2,\ldots\,x_n}$, is defined \[e_2(x_1,x_2,\ldots,x_n)=\sum_{1\leq i < j \leq n}{x_ix_j}.\] 
For example, if ${\lambda=(4,3,1,1)}$ is a partition of $9$, then 
\[e_2(\lambda)=\Big(4(3+1+1)+3(1+1)+1(1) \Big)=27.\]
We will make use of the following result.

\begin{lemma}
If ${\lambda=(\lambda_1, \lambda_2 ,\ldots, \lambda_{\ell})}$ is a partition of a positive integer $n$, then 
\[e_2(\lambda)=\binom{n+1}{2}-\sum_{i=1}^{\ell}\binom{\lambda_i+1}{2}.\]

\label{lemma1}
\end{lemma}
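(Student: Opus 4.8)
The plan is to reduce everything to power sums via a Newton-type identity. First I would record the elementary identity
\[
e_2(x_1,\ldots,x_\ell) = \frac{1}{2}\left[\left(\sum_{i=1}^\ell x_i\right)^2 - \sum_{i=1}^\ell x_i^2\right],
\]
which follows by expanding the square and observing that each cross term $x_ix_j$ with $i\neq j$ appears exactly twice while each square $x_i^2$ appears once. Specializing $x_i = \lambda_i$ and using $\sum_{i=1}^\ell \lambda_i = n$ gives $e_2(\lambda) = \tfrac{1}{2}\big(n^2 - \sum_{i=1}^\ell \lambda_i^2\big)$.

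Next I would expand the right-hand side of the claimed formula. Using $\binom{m+1}{2} = \tfrac{m(m+1)}{2} = \tfrac{m^2+m}{2}$ for $m = n$ and for each $m = \lambda_i$,
\[
\binom{n+1}{2} - \sum_{i=1}^\ell \binom{\lambda_i+1}{2} = \frac{n^2+n}{2} - \frac{1}{2}\sum_{i=1}^\ell \big(\lambda_i^2 + \lambda_i\big) = \frac{n^2+n}{2} - \frac{1}{2}\sum_{i=1}^\ell \lambda_i^2 - \frac{n}{2},
\]
where in the last step I again invoked $\sum_{i=1}^\ell \lambda_i = n$. The two linear terms $\tfrac{n}{2}$ cancel, leaving $\tfrac{1}{2}\big(n^2 - \sum_{i=1}^\ell \lambda_i^2\big)$, which is exactly the expression obtained for $e_2(\lambda)$ above. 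This completes the argument.

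No step here is a genuine obstacle; the only thing to watch is the bookkeeping of the terms linear in $\lambda_i$, making sure the $+n$ coming from $\binom{n+1}{2}$ cancels the $-\sum_{i=1}^\ell \lambda_i = -n$ coming from the summands $\binom{\lambda_i+1}{2}$. As a remark, one can instead give a purely bijective proof: $\binom{n+1}{2}$ counts the two-element multisets of boxes of the Young diagram of $\lambda$, $\binom{\lambda_i+1}{2}$ counts those multisets whose two boxes both lie in row $i$, so the difference counts (necessarily distinct) pairs of boxes lying in two different rows; sorting such a pair according to the unordered pair of rows $\{i,j\}$ it occupies yields $\sum_{1\leq i<j\leq \ell}\lambda_i\lambda_j = e_2(\lambda)$.
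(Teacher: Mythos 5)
Your proof is correct and, at its core, rests on the same identity the paper uses: expanding $\left(\sum_i\lambda_i\right)^2=\sum_i\lambda_i^2+2e_2(\lambda)$ and cancelling the linear terms coming from $\binom{n+1}{2}$ and the $\binom{\lambda_i+1}{2}$, just organized as a direct two-sided computation rather than the paper's single chain of manipulations. The bijective argument in your closing remark (counting two-element multisets of boxes and subtracting those confined to a single row) is a genuinely different and arguably more illuminating proof, and would stand perfectly well on its own.
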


\begin{proof}
Let ${\lambda=(\lambda_1,\lambda_2,\ldots,\lambda_{\ell})}$ be a partition of $n$. Note that ${\sum_{i=1}^{\ell}{\lambda_i}=n}$. Then 
\begin{align*}
    e_2(\lambda) &= \sum_{1\leq i < j \leq \ell}{\lambda_i \lambda_j}\\
    &=\binom{n+1}{2}-\left( \sum_{1\leq i < j \leq \ell}{\big(-\lambda_i \lambda_j \big)}+\binom{n+1}{2}\right)\\
    &=\binom{n+1}{2}-\frac{1}{2}\left( \sum_{1\leq i < j \leq \ell}{\big(-2\lambda_i \lambda_j \big)}+n(n+1)\right)\\
    &=\binom{n+1}{2}-\frac{1}{2}\left( \sum_{1\leq i < j \leq \ell}{\big(-2\lambda_i \lambda_j \big)}+\left(\sum_{i=1}^{\ell}{\lambda_i}\right)\left(\sum_{j=1}^{\ell}{\lambda_j}+1\right)\right)\\
    &=\binom{n+1}{2}-\frac{1}{2}\left( \sum_{1\leq i < j \leq \ell}{\big(-2\lambda_i \lambda_j \big)}+\sum_{i=1}^{\ell}{\left(\lambda_{i}^{2}+\lambda_i\right)} + \sum_{1\leq i<j\leq \ell}{\big(2\lambda_i \lambda_j\big)} \right)\\
    &=\binom{n+1}{2}-\frac{1}{2}\left( \sum_{i=1}^{\ell}{\lambda_{i}(\lambda_i+1)} \right)\\
    &=\binom{n+1}{2}-\left( \sum_{i=1}^{\ell}{\binom{\lambda_i+1}{2}} \right).\\
\end{align*}
\end{proof}

\newpage
\section{The Gini index of an integer partition}
\label{gini}
As previously stated, we restrict our study of the Gini index to finite populations where the amount of a discrete indivisible resource is equal to the size of the population. In other words, there is one of said resource available for each person. The distributions of $n$ of such a resource amongst $n$ people is in one-to-one correspondence with the integer partitions of $n$.

For example, if there are $4$ dollars in a population of $4$ people, then the partition ${(3,1)}$ of $4$ would correspond to one person having $3$ dollars, one person having $1$ dollar, and the two remaining people having nothing. Whereas the partitions ${(1,1,1,1)}$ and ${(4)}$ correspond to completely equitable and completely inequitable distributions, respectively.

Given a partition $\lambda=(\lambda_1,\ldots,\lambda_n)$ of a positive integer $n$ (padded with zeros on the tail, if necessary), the $\textit{Lorenz curve of $\lambda$}$, $L_{\lambda}:[0,n]\longrightarrow [0,n]$, is defined as $L_{\lambda}(0)=0$, and $L_{\lambda}(x)=\sum_{i=n-k+1}^{n}{\lambda_i}$, where $1\leq k\leq n$ is the unique positive integer such that $x\in(k-1,k]$.
In other words, for $k$ from $1$ to $n$, the Lorenz curve of $\lambda$ on the interval $(k-1,k]$ is the sum of the last $k$ parts of $\lambda$, i.e.,  $\lambda_n+\lambda_{n-1}+\dots+\lambda_{n-k+1}$. Since total equality corresponds to the flat partition $(1^n)$, using the above definition for the Lorenz curve of a partition, we find that the line of equality is given by $y=\lceil x \rceil$.

\begin{figure}[htp]
    \centering
    \includegraphics[width=10cm]{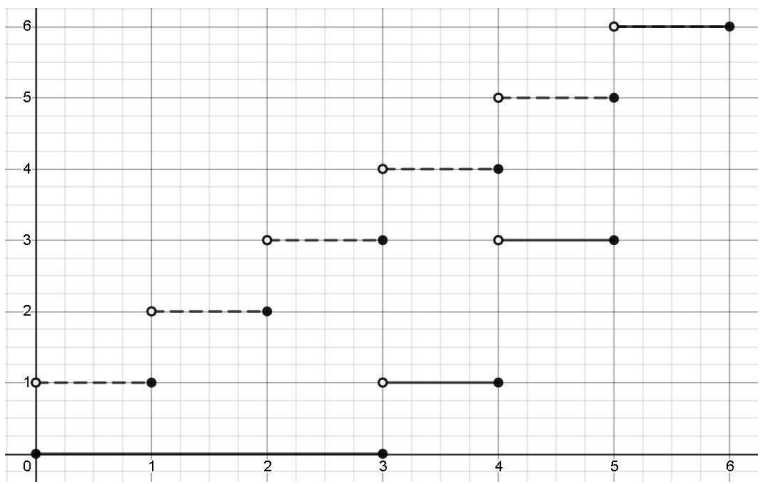}
    \caption{The line of equity (dashed) and the Lorenz curve of the partition (3,2,1) of 6 (solid).}
\end{figure}

The standard Gini index is calculated by finding the area between the line of equality and the Lorenz curve, and normalizing. In a similar fashion, we define the Gini index, $g$, of a partition $\lambda=(\lambda_1,\ldots,\lambda_n)$ of $n$ by 
\begin{align*}
    g(\lambda)&= \int_{0}^{n} \left( \lceil x \rceil -L_{\lambda}(x) \right) dx\\
    &=\binom{n+1}{2}-\sum_{i=1}^{n}{i\lambda_i}.
\end{align*}

The ordinary Gini index is normalized to be between zero and one. For a fixed value of $n$, the function $g$ attains its maximum value of $\binom{n}{2}$ on the partition $(n)$ of $n$. So the Gini index of a partition $\lambda$ of $n$ can be normalized by dividing $g(\lambda)$ by $\binom{n}{2}$. As long as $n$, and ${g(\lambda)}$ are both known, the normalized Gini index of $\lambda$ can always be calculated in this fashion. With this in mind, we may disregard the normalization, and view $g$ itself as the integer valued Gini index of a partition.

Our construction of $g$ in conjunction with Lemma~\ref{lemma1} yields some interesting results:

\begin{proposition} 
If $\lambda$ is an integer partition, then ${g(\lambda)=e_{2}(\widetilde{\lambda})}$, where ${\widetilde{\lambda}}$ is the conjugate partition of $\lambda$.
\label{prop1}
\end{proposition}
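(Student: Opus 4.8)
The plan is to reduce the claim to a single combinatorial identity relating $\sum_i i\lambda_i$ to the content of the conjugate partition, and then invoke Lemma~\ref{lemma1}. Recall that $g(\lambda)=\binom{n+1}{2}-\sum_{i=1}^{n}i\lambda_i$ and, applying Lemma~\ref{lemma1} to the partition $\widetilde{\lambda}$ (which is also a partition of $n$), $e_2(\widetilde{\lambda})=\binom{n+1}{2}-\sum_{j}\binom{\widetilde{\lambda}_j+1}{2}$. So it suffices to prove
\[
\sum_{i=1}^{n}i\lambda_i=\sum_{j\geq 1}\binom{\widetilde{\lambda}_j+1}{2}.
\]
Both sides will be interpreted as weighted box-counts on the Young diagram of $\lambda$, and the identity will follow by summing the same quantity in two orders.

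First I would record the standard fact that the Young diagram of $\lambda$ contains a box in position $(i,j)$ precisely when $j\leq\lambda_i$, equivalently when $i\leq\widetilde{\lambda}_j$; in particular $\widetilde{\lambda}_j$ equals the number of parts of $\lambda$ that are at least $j$. Then I would compute $\sum_i i\lambda_i$ by assigning to each box in row $i$ the weight $i$ and summing: summing row by row gives $\sum_i i\lambda_i$ directly, while summing column by column gives $\sum_j\sum_{i=1}^{\widetilde{\lambda}_j} i=\sum_j\binom{\widetilde{\lambda}_j+1}{2}$. Equating the two evaluations yields the displayed identity. (If $\lambda$ has been padded with trailing zeros, those parts contribute $0\cdot i=0$ on the left and the corresponding empty columns contribute $\binom{0+1}{2}=0$ on the right, so the padding is harmless.)

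Finally I would combine the pieces: $g(\lambda)=\binom{n+1}{2}-\sum_i i\lambda_i=\binom{n+1}{2}-\sum_j\binom{\widetilde{\lambda}_j+1}{2}=e_2(\widetilde{\lambda})$, where the last equality is Lemma~\ref{lemma1}. I do not anticipate a genuine obstacle here; the only point requiring care is the justification of the column-wise sum, i.e.\ that the inner sum runs exactly from $1$ to $\widetilde{\lambda}_j$, which is immediate from the box-containment criterion above. An alternative, essentially equivalent, route would be to expand $\binom{\widetilde{\lambda}_j+1}{2}=\sum_{i=1}^{\widetilde{\lambda}_j} i$ from the start and interchange the order of summation over the set $\{(i,j): i\leq \widetilde{\lambda}_j\}=\{(i,j): j\leq \lambda_i\}$; I would present whichever version reads more cleanly.
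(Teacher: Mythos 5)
Your proposal is correct and is essentially the paper's own argument: both reduce the claim to Lemma~\ref{lemma1} together with a row-versus-column count of boxes in the Young diagram, the paper via two fillings of the diagram (entries counting boxes weakly above, resp.\ weakly to the left) and the observation that conjugation exchanges them, you via the equivalent double count of the weight $i$ on box $(i,j)$ over the set $\{(i,j):j\leq\lambda_i\}=\{(i,j):i\leq\widetilde{\lambda}_j\}$. If anything, your version makes the paper's concluding ``counting columns versus counting rows'' step explicit as the identity $\sum_i i\lambda_i=\sum_j\binom{\widetilde{\lambda}_j+1}{2}$.
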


\begin{proposition}
Let $\lambda$ and $\mu$ be partitions of $n$. If $\mu \prec \lambda$, then $g(\mu)<g(\lambda)$ and $e_{2}(\lambda)<e_{2}(\mu)$.
\label{prop2}
\end{proposition}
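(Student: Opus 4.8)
The plan is to establish the two inequalities in turn, deriving the second from the first by conjugation; everything rests on a single summation-by-parts identity.

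First I would use the closed form $g(\nu)=\binom{n+1}{2}-\sum_{i=1}^{n}i\nu_i$ obtained above, so that
\[
g(\lambda)-g(\mu)=\sum_{i=1}^{n}i\mu_i-\sum_{i=1}^{n}i\lambda_i .
\]
For a partition $\nu=(\nu_1,\ldots,\nu_n)$ of $n$ write $S_k(\nu)=\sum_{i=1}^{k}\nu_i$, so $S_0(\nu)=0$ and $S_n(\nu)=n$. Abel summation gives
\[
\sum_{i=1}^{n}i\nu_i=\sum_{i=1}^{n}i\big(S_i(\nu)-S_{i-1}(\nu)\big)=nS_n(\nu)-\sum_{k=1}^{n-1}S_k(\nu)=n^{2}-\sum_{k=1}^{n-1}S_k(\nu),
\]
the boundary terms collapsing precisely because $S_n(\nu)=n$ and $S_0(\nu)=0$. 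Subtracting the instances for $\lambda$ and $\mu$ yields
\[
g(\lambda)-g(\mu)=\sum_{k=1}^{n-1}\big(S_k(\lambda)-S_k(\mu)\big).
\]
Now $\mu\preceq\lambda$ is exactly the statement that $S_k(\mu)\le S_k(\lambda)$ for every $k$, so each summand is nonnegative and $g(\mu)\le g(\lambda)$. If equality held, then $S_k(\mu)=S_k(\lambda)$ for all $k\le n-1$, and also for $k=n$ since both equal $n$; taking successive differences forces $\mu_k=\lambda_k$ for all $k$, contradicting $\mu\neq\lambda$. Hence $g(\mu)<g(\lambda)$.

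For the second inequality I would invoke Proposition~\ref{prop1}, which says $g(\nu)=e_2(\widetilde{\nu})$ for every partition $\nu$, together with the fact that conjugation is an antiautomorphism of $P_n$ and a bijection, so that $\mu\prec\lambda$ implies $\widetilde{\lambda}\prec\widetilde{\mu}$. Applying the inequality just proved to the pair $\widetilde{\lambda}\prec\widetilde{\mu}$ gives $g(\widetilde{\lambda})<g(\widetilde{\mu})$, and Proposition~\ref{prop1} (using $\widetilde{\widetilde{\nu}}=\nu$) rewrites this as $e_2(\lambda)<e_2(\mu)$.

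I do not expect a genuine obstacle here: the only delicate points are the index bookkeeping in the Abel summation and the strictness argument, which relies on the elementary observation that two partitions of $n$ with identical partial sums must coincide. One could instead prove $e_2(\lambda)<e_2(\mu)$ directly from Lemma~\ref{lemma1}, since that inequality is equivalent to $\sum_i\lambda_i^{2}>\sum_i\mu_i^{2}$; but routing through Proposition~\ref{prop1} avoids re-deriving the strict Schur-convexity of the power sum and keeps the proof self-contained.
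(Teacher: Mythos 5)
Your proof is correct, but it takes a genuinely different route from the paper's. The paper reduces to the case where $\lambda$ covers $\mu$, invoking Brylawski's characterization of covering relations in the dominance lattice, then computes the change in $e_2$ across a single cover by a local Young-diagram argument (obtaining the explicit increment $e_2(\mu)-e_2(\lambda)=\mu_i+1-\mu_k>0$), transfers the result to $g$ via conjugation and Proposition~\ref{prop1}, and finishes by transitivity. You instead argue globally: the Abel-summation identity $\sum_{i=1}^{n}i\nu_i=n^2-\sum_{k=1}^{n-1}S_k(\nu)$ turns $g(\lambda)-g(\mu)$ into $\sum_{k=1}^{n-1}\bigl(S_k(\lambda)-S_k(\mu)\bigr)$, which is termwise nonnegative precisely by the definition of dominance, with strictness because the partial sums determine the partition; you then obtain the $e_2$ inequality by conjugation, mirroring the paper's use of the antiautomorphism but in the opposite direction (the paper proves the $e_2$ inequality directly and deduces the $g$ inequality). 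Your computation checks out (the boundary terms do collapse since $S_0=0$ and $S_n=n$), and your approach is more self-contained, since it needs no external characterization of covers; what it gives up is the paper's finer local information about exactly how much $e_2$ changes across a single covering relation.
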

The normalized Gini index on $\mathbb{R}^n$ restricted to $P_n$ is equal to $\frac{2g}{n^2}$. Arnold et.al. \cite{Inequalities:1} have shown that the normalized Gini index is strictly Schur convex, so Proposition~\ref{prop2} follows from this fact. A complete proof of Proposition~\ref{prop2} that does not utilize these facts will be given in section 5.

The converse of Proposition~\ref{prop2} does not hold, in general. However, the contrapositive provides us with an easily calculated lower bound on the width of $P_n$. For if $\lambda,\mu\in P_n$ are distinct partitions such that $g(\lambda)= g(\mu)$, then the partitions $\lambda$ and $\mu$ are incomparable. Such lower bounds can be calculated using the generating function of $g$ given in the following section.

\section{Generating functions}
\label{gen}

It is often useful in Algebraic Combinatorics to record a discrete data set in the coefficients or powers of a formal power series. We call these power series ``generating functions'' for the data set. By ``formal'' we mean that the convergence of the series is immaterial. Any variables appearing in the series are taken as indeterminates rather than numbers. Alternatively, one may consider a formal power series as an ordinary power series that converges only at zero.

We define a generating function  for the Gini index $g(\lambda)$ of an integer partition $\lambda$ by
\[G(q,x)=\sum_{n=1}^{\infty}{\sum_{\lambda \vdash n}{q^{\left(\binom{n+1}{2}-g(\lambda)\right)}x^n}}.\]

Perhaps the most widely known example of a generating function is that of the integer partition function ${P(n)}$, which counts the number of partitions of the integer $n$. For example, ${n=4}$ has partitions

\begin{center}
    $(1,1,1,1)$,
    $(2,1,1)$,
    $(2,2)$,
    $(3,1)$, and
    $(4)$,
\end{center}
so ${P(4)=5.}$ It is well known \cite{Andrews:1} that ${P(n)}$ has generating function
\[\prod_{n=1}^{\infty}{\frac{1}{1-x^n}}=\sum_{n=0}^{\infty}{P(n)x^n},\]
where $P(0)$ is defined to be $1$.

In light of our previous results, we obtain a similar equality for ${G(q,x)}$.
\begin{proposition}
\[\prod_{n=1}^{\infty}{\frac{1}{1-q^{\binom{n+1}{2}}x^n}}-1=\sum_{n=1}^{\infty}{\sum_{\lambda \vdash n}{q^{\left(\binom{n+1}{2}-g(\lambda)\right)}x^n}}.\]
\label{prop3}
\end{proposition}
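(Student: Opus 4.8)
The plan is to expand the left-hand product into a sum over partitions and then match it term-by-term with the right-hand side using Lemma~\ref{lemma1} and Proposition~\ref{prop1}. First I would write each factor as a geometric series, $\frac{1}{1-q^{\binom{m+1}{2}}x^m}=\sum_{a_m\geq 0}q^{a_m\binom{m+1}{2}}x^{m a_m}$, and multiply out. A typical term of the product is then indexed by a sequence $(a_1,a_2,\ldots)$ of nonnegative integers, all but finitely many zero, and contributes $q^{\sum_m a_m\binom{m+1}{2}}x^{\sum_m m a_m}$. The crucial observation is that such a sequence is precisely the data of a partition written in multiplicity notation: let $\mu$ be the partition having $a_m$ parts equal to $m$ for each $m\geq 1$. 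Then $\mu\vdash n$ with $n=\sum_m m a_m$, and as $(a_1,a_2,\ldots)$ ranges over all admissible sequences, $\mu$ ranges over all partitions of all positive integers, together with the empty partition (the all-zero sequence), which contributes the constant term $1$; this is exactly the $-1$ subtracted on the left.

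Next I would identify the exponent of $q$ attached to $\mu$. Since $a_m$ is the number of parts of $\mu$ equal to $m$, we have $\sum_m a_m\binom{m+1}{2}=\sum_i\binom{\mu_i+1}{2}$, the sum running over the parts of $\mu$. By Lemma~\ref{lemma1} this equals $\binom{n+1}{2}-e_2(\mu)$, and applying Proposition~\ref{prop1} to $\widetilde\mu$ (using $\widetilde{\widetilde\mu}=\mu$) gives $e_2(\mu)=g(\widetilde\mu)$. Hence the term indexed by $\mu$ is $q^{\binom{n+1}{2}-g(\widetilde\mu)}x^n$, so that
\[\prod_{m=1}^{\infty}\frac{1}{1-q^{\binom{m+1}{2}}x^m}-1=\sum_{n=1}^{\infty}\sum_{\mu\vdash n}q^{\binom{n+1}{2}-g(\widetilde\mu)}x^n.\]
Finally, since conjugation $\mu\mapsto\widetilde\mu$ is a bijection on the set of partitions of $n$, I would reindex the inner sum by $\lambda=\widetilde\mu$, turning the right-hand side into $\sum_{n\geq 1}\sum_{\lambda\vdash n}q^{\binom{n+1}{2}-g(\lambda)}x^n$, which is the claimed identity.

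Throughout, convergence is not an issue since we work with formal power series: for each fixed $n$ only finitely many sequences $(a_m)$ satisfy $\sum_m m a_m=n$, so the coefficient of $x^n$ is a genuine polynomial in $q$. I do not anticipate a serious obstacle; the only point needing care is the bookkeeping that carries the product over $m$ (where $m$ indexes the \emph{part sizes} of $\mu$) to a sum weighted by $g$ of the \emph{conjugate} partition, which is what makes the concluding conjugation step both necessary and exactly sufficient to land on the stated form.
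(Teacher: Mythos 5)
Your proof is correct and follows essentially the same route as the paper's: expand each factor as a geometric series, identify the resulting multiplicity sequences with partitions of $n$, convert the $q$-exponent $\sum_i\binom{\mu_i+1}{2}$ to $\binom{n+1}{2}-e_2(\mu)$ via Lemma~\ref{lemma1}, and pass to $g$ via Proposition~\ref{prop1}. Your explicit reindexing of the inner sum by $\lambda=\widetilde\mu$ makes a step the paper leaves implicit cleanly visible, but it is the same argument.
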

The proof is provided in the following section.

We can use $G(q,x)$ to find lower bounds on the width of $P_n$ by calculating the cardinalities of the maximum level sets of $g$ on $P_n$. In particular, the cardinality of the maximum level set of $g$ on $P_n$ will be the largest coefficient on the powers of $q$ that form the coefficient of $x^n$. Expanding $G(q,x)$ yields
\begin{align*}
        \sum_{n=1}^{\infty}\sum_{\lambda \vdash n}q^{\left(\binom{n+1}{2}-g(\lambda)\right)}x^{n}
        &=qx+(q^2+q^3)x^2+(q^3+q^4+q^6)x^3\\
        &+(q^4+q^5+q^6+q^7+q^{10})x^4\\
        &+(q^5+q^6+q^7+q^8+q^9+q^{11}+q^{15})x^5\\
        &+(\cdots+{2}q^9+\cdots)x^6+(\cdots+2q^{10}+\cdots)x^7\\
        &+(\cdots+2q^{11}+\cdots)x^8+(\cdots+3q^{15}+\cdots)x^9\\
        &+\cdots.
    \end{align*}
So the cardinalities of the maximum level sets of $g$ are 1, 1, 1, 1, 1, 2, 2, 2, and 3, on $P_1$ through $P_9$, respectively.

Let $b(n)$ denote the cardinality of the maximal level set of $g$ on $P_n$ \seqnum{A337206}. Proposition~\ref{prop2} implies that $b(n)\leq a(n)$ for all positive integers $n$, where $a(n)$ is the size of the maximum antichain in $P_n$ \seqnum{A076269}. Early \cite{Antichain:1} showed that ${\Omega\left(n^{-5/2}e^{\pi \sqrt{2n/3}}\right)\leq a(n)}$. It is currently unknown how $b(n)$ relates asymptotically to $a(n)$ or $n^{-5/2}e^{\pi \sqrt{2n/3}}$. However, we conjecture that ${b(n)\leq O\left(n^{-5/2}e^{\pi \sqrt{2n/3}} \right)}.$

\section{Proofs of main results}
\subsection{Proof of Proposition~\ref{prop1}}
\begin{proof}
Let ${\lambda=(\lambda_1,\lambda_2,\ldots,\lambda_{\ell})}$ be a partition of a positive integer $n$, where
\[\lambda_1\geq\lambda_2\geq\ldots\geq\lambda_{\ell}>0\]
and ${\sum_{i=1}^{\ell}{\lambda_i}=n}$. We can calculate $g{(\lambda)}$ by filling the Young diagram of shape $\lambda$ with numbers, where the entry in any box counts the number of boxes in that column that are weakly above it. For example, for the partition ${(4,3,1,1)}$, we would have
\[\young(1111,222,3,4)\,.\]
Then the sums of the values in each row are
\begin{align*}
    \sum{\big(\textrm{Entries in row }1\big)}&=\lambda_1,\\
    \sum{\big(\textrm{Entries in row }2\big)}&=2\lambda_2,\\
    \sum{\big(\textrm{Entries in row }3\big)}&=3\lambda_3,\\
    &\shortvdotswithin{=}
    \sum{\big(\textrm{Entries in row }\ell\big)}&=\ell\lambda_{\ell}.\\
\end{align*}
Summing all values in the Young diagram of $\lambda$ yields $\sum_{i=1}^{\ell}i\lambda_{i}$. By subtracting this from ${\binom{n+1}{2}}$ we have
\[\binom{n+1}{2}-\sum{\big(\textrm{Entries in Young Diagram }i\big)}=\binom{n+1}{2}-\sum_{i=1}^{\ell}{i\lambda_i}
    =g(\lambda).\]

We can calculate ${e_2(\lambda)}$ similarly by forming a Young diagram of shape $\lambda$ where each box's entry counts the number of boxes in the same row that are weakly to the left of its own. Again using ${(4,3,1,1)}$ as an example, we would have
\[\young(1234,123,1,1)\,.\]
In general, the ${i^{\textrm{th}}}$ row  of the diagram for $\lambda$ will be of the form 

\[\ytableausetup{mathmode, boxsize=2em}
\begin{ytableau}
\scriptstyle{1} & \scriptstyle{2} & \none[\dots] & \scriptstyle{\lambda_i-1} & \scriptstyle{\lambda_i} \\
\end{ytableau}\,,\]
so the sum of the boxes in the ${i^{\textrm{th}}}$ row will be ${\binom{\lambda_i+1}{2}}$. Summing all of the entries in the Young diagram of $\lambda$ and subtracting this from ${\binom{n+1}{2}}$ yields
\[\binom{n+1}{2}-\sum_{i=1}^{\ell}{\big(\textrm{Entries in row }i\big)}=\binom{n+1}{2}-\sum_{i=1}^{\ell}\binom{\lambda_i+1}{2}=e_2(\lambda),\]
where the last equality is by Lemma~\ref{lemma1}. Since ${g(\lambda)}$ is calculated by counting boxes in the columns of the Young diagram of $\lambda$, and ${e_2(\lambda)}$ is calculated by counting boxes in the rows, it follows that $g(\lambda)=e_2(\widetilde{\lambda})$.

\end{proof}
\subsection{Proof of Proposition~\ref{prop2}}
\begin{proof}
Let $\lambda=(\lambda_1,\lambda_2,\ldots,\lambda_n)$ and $\mu=(\mu_1,\mu_2,\ldots,\mu_n)$ be partitions of $n$ (padded with zeros in their tails, if necessary). Suppose that $\lambda$ covers $\mu$, i.e., there is no partition $\rho$ of $n$ such that $\mu \prec \rho \prec \lambda$. Now $\lambda$ covers $\mu$ if and only if 
\begin{align*}
    \lambda_i &= \mu_i +1,\\
    \lambda_k &= \mu_k -1, and\\
    \lambda_j &= \mu_j,
\end{align*}
for all $j \neq i$ or $k$, where $k=i+1$ or $\mu_i=\mu_k$ \cite{Brylawski:1}. In other words, $\lambda$ covers $\mu$ if and only if all but two of the rows (row $i$ and $k$, with $i<k$) in the Young diagrams of $\lambda$ and $\mu$ are of the same length, and the diagram of $\lambda$ can be obtained from that of $\mu$ by removing the last box from the $k^{\textrm{th}}$ row, and appending it to the end of the $i^{\textrm{th}}$ row.

Begin with the Young diagram of $\mu$ and, as in the proof of Proposition~\ref{prop1}, fill the diagram with numbers so that each box's entry counts the number of boxes weakly to the left of it.
\[\ytableausetup
{mathmode, boxsize=2em}
\begin{ytableau}
\scriptstyle{1} & \scriptstyle{2} & \none[\dots] & \scriptstyle{\mu_1-3} & \scriptstyle{\mu_1-2} & \scriptstyle{\mu_1-1} & \scriptstyle{\mu_1} \\
\none[\vdots]\\
\scriptstyle{1} & \scriptstyle{2} & \none[\dots] & \scriptstyle{\mu_i-1} & \scriptstyle{\mu_i} \\
\none[\vdots]\\
\scriptstyle{1} & \scriptstyle{2} & \none[\dots] & \scriptstyle{\mu_k-1} & \scriptstyle{\mu_k} \\
\none[\vdots]\\
\scriptstyle{1} & \scriptstyle{2} & \none[\dots] & \scriptstyle{\mu_n} \\
\end{ytableau}\]
From row $k$ we remove the box containing $\mu_k$ and append it to the end of row $i$ to obtain a diagram of shape $\lambda$.
\[\ytableausetup
{mathmode, boxsize=2em}
\begin{ytableau}
\scriptstyle{1} & \scriptstyle{2} & \none[\dots] & \scriptstyle{\mu_1-3} & \scriptstyle{\mu_1-2} & \scriptstyle{\mu_1-1} & \scriptstyle{\mu_1} \\
\none[\vdots]\\
\scriptstyle{1} & \scriptstyle{2} & \none[\dots] & \scriptstyle{\mu_i-1} & \scriptstyle{\mu_i} & \scriptstyle{\mu_k}\\
\none[\vdots]\\
\scriptstyle{1} & \scriptstyle{2} & \none[\dots] & \scriptstyle{\mu_k-1}  \\
\none[\vdots]\\
\scriptstyle{1} & \scriptstyle{2} & \none[\dots] & \scriptstyle{\mu_n} \\
\end{ytableau}\]
But $i < k$, hence $\mu_k \leq \mu_i$, and the corresponding filling of the diagram for $\lambda$ would have the last cell in row $i$ containing $\mu_i+1$, which is strictly greater than $\mu_k$. Thus the sum of all numbers in the diagram for $\lambda$ is 
\[\sum_{\substack{j=1 \\ j\neq i,k}}^{n}{\binom{\mu_j+1}{2}}+{\binom{\mu_i+2}{2}} + {\binom{\mu_k}{2}}, \]
and the sum of all numbers in the diagram for $\mu$ is
\[\sum_{j=1}^{n}{\binom{\mu_j+1}{2}}. \]

By Lemma~\ref{lemma1}, we have
\begin{align*}
    e_2(\mu)-e_2(\lambda) =& \,\binom{n+1}{2}-\sum_{j=1}^{n}{\binom{\mu_j+1}{2}}-\\
    &\,\left( \binom{n+1}{2}-\left(\sum_{\substack{j=1 \\ j\neq i,k}}^{n}{\binom{\mu_j+1}{2}}+{\binom{\mu_i+2}{2}} + {\binom{\mu_k}{2}} \right)\right)\\
    =&\,{\binom{\mu_i+2}{2}}+{\binom{\mu_k}{2}}-{\binom{\mu_i+1}{2}}-{\binom{\mu_k+1}{2}}\\
    =&\,\frac{(\mu_i+1)(\mu_i+2-\mu_i)+(\mu_k)(\mu_k-2-\mu_k)}{2}\\
    =&\,\mu_i+1-\mu_k\\
    >&\,0.
\end{align*}
So $e_2(\mu)>e_2(\lambda)$. Moreover $\mu \prec \lambda$ if and only if $\widetilde{\lambda} \prec \widetilde{\mu}$. Hence $\widetilde{\mu}$ covers $\widetilde{\lambda}$, and by Proposition~\ref{prop1}, $g(\mu)< g(\lambda).$ The general case follows by transitivity.
\end{proof}

\subsection{Proof of Proposition~\ref{prop3}}
\begin{proof}
We will show that the power series about $x=0$ of the product \begin{equation}
    {\prod_{n=1}^{\infty}{\frac{1}{1-q^{\binom{n+1}{2}}x^n}}}-1
    \label{eq1}
\end{equation} has as its general coefficient \[{\sum_{\lambda\vdash n}{q^{\left(\binom{n+1}{2}-g(\lambda)\right)}}}.\]
Considering each factor of the product as a geometric series, we have

\begin{align*}
    \prod_{n=1}^{\infty}{\frac{1}{1-q^{\binom{n+1}{2}}x^{n}}}=
    &
    \frac{1}{\left(1-q^{\binom{2}{2}}x\right)}
    \cdot
    \frac{1}{\left(1-q^{\binom{3}{2}}x^2\right)}
    \cdot
    \frac{1}{\left(1-q^{\binom{4}{2}}x^3\right)}
    \cdot
    \frac{1}{\left(1-q^{\binom{5}{2}}x^4\right)}
    \cdot
    \cdots
    \\
    =&
    \left(1+q^{\binom{2}{2}}x+q^{2{\binom{2}{2}}}x^2+q^{3{\binom{2}{2}}}x^3+q^{4{\binom{2}{2}}}x^4+\cdots \right)\cdot
    \\
    &
    \left(1+q^{\binom{3}{2}}x^2+q^{2{\binom{3}{2}}}x^4+q^{3{\binom{3}{2}}}x^6+q^{4{\binom{3}{2}}}x^8+\cdots \right)\cdot
    \\
    &
    \left(1+q^{{\binom{4}{2}}}x^3+q^{2{\binom{4}{2}}}x^6+q^{3{\binom{4}{2}}}x^9+q^{4{\binom{4}{2}}}x^{12}+\cdots \right)\cdot
    \\
    &
    \left(1+q^{{\binom{5}{2}}}x^4+q^{2{\binom{5}{2}}}x^8+q^{3{\binom{5}{2}}}x^{12}+q^{4{\binom{5}{2}}}x^{16}+\cdots \right)\cdot\cdots.
\end{align*}

If we distribute and simplify, for example, the coefficient of $x^4$, we see that it is
\[ q^{4{\binom{2}{2}}}+q^{2{\binom{3}{2}}}+q^{{\binom{2}{2}}+{\binom{4}{2}}}+q^{2{\binom{2}{2}}+{\binom{3}{2}}}+q^{{\binom{5}{2}}},  \]
where the various terms correspond to the partitions
\[(1,1,1,1),\,(2,2),\,(3,1),\,(2,1,1),\,\textrm{and}\,(4),\]
respectively, by 
\[
    (\lambda_1,\lambda_2,\ldots,\lambda_l)\mapsto q^{\left(\sum_{i=1}^{l}{\binom{\lambda_i+1}{2}}\right)}.
\]
This is true, in general, for the coefficient of $x^n$, for all positive integers $n$. To see this, consider the coefficient on $x^n$ in the power series expansion of formula~\ref{eq1}. If we set $q=1$ in the product of formula~\ref{eq1}, we obtain the generating function of $P(n)$ (the number of partitions of $n$). Hence there are $P(n)$ different ways to obtain a power of $x^n$. So the $x^n$ term in formula~\ref{eq1} will be of the form
\begin{equation*}
    \sum_{j=1}^{P(n)}\prod _{i=1}^{m_j}q^{\left(a_{j,i}{\binom{\lambda_{j,i}+1}{2}}\right)}x^{\left(a_{j,i}\lambda_{j,i}\right)},
\end{equation*}
where $a_{j,i},\lambda_{j,i}>0$, and $\sum_{i=1}^{m_j}a_{j,i}\lambda_{j,i}=n$. Thus the coefficient on $x^n$ will be
\begin{equation}
    \sum_{j=1}^{P(n)}q^{\left(\sum_{i=1}^{m_j}{ a_{j,i}{\binom{\lambda_{j,i}+1}{2}} }\right) }.
    \label{eq2}
\end{equation}
Since each ${\binom{\lambda_{j,i}+1}{2}}$ in formula~\ref{eq2} comes from a different term of the product in formula~\ref{eq1}, we have that $\lambda_{j,i}\neq\lambda_{j,k}$ whenever $i\neq k$. Therefore, by reordering, we may choose the $\lambda_{j,i}$ such that $\lambda_{j,i}>\lambda_{j,{i+1}}> 0$, for $1\leq i < m_j$. It follows that $(\lambda_{j,1}^{a_{j,1}},\lambda_{j,2}^{a_{j,2}},\ldots,\lambda_{j,{m_j}}^{a_{j,{m_j}}})$ is a partition of $n$, where $\lambda_{j,i}$ is repeated $a_{j,i}$ times.

Since every partition $(\lambda_{j,1}^{a_{j,1}},\lambda_{j,2}^{a_{j,2}},\ldots,\lambda_{j,{m_j}}^{a_{j,{m_j}}})$ of $n$ contributes to the coefficient of $x^n$ in formula~\ref{eq2}, the formula may be rewritten as 

\begin{equation*}
 \sum_{\lambda\vdash n} q^{\left(\sum_{i=1}^{n}\binom{\lambda_i+1}{2}\right)}.
\end{equation*}
By Lemma~\ref{lemma1}, $e_2(\lambda)=\binom{n+1}{2}-\sum_{i=1}^{n}\binom{\lambda_i+1}{2}$, thus 
\begin{equation*}
    \sum_{\lambda\vdash n} q^{\left(\sum_{i=1}^{n}\binom{\lambda_i+1}{2}\right)}=\sum_{\lambda\vdash n} q^{\left(\sum_{i=1}^{n} \binom{n+1}{2}-e_2(\lambda)\right)}.
\end{equation*}
Finally, by Proposition~\ref{prop1}, we have that the general coefficient on $x^n$ in the power series expansion of formula~\ref{eq1} is
\begin{equation*}
    \sum_{\lambda\vdash n} q^{\left(\sum_{i=1}^{n} \binom{n+1}{2}-g(\lambda)\right)}.
\end{equation*}
\end{proof}
\newpage
\bibliographystyle{Bibtex.sty}
\bibliography{References}

\bigskip
\hrule
\bigskip

\noindent 2010 {\it Mathematics Subject Classification}:
Primary 05A17, Secondary 05A15, 05E05, 06A06, 91B99.

\noindent \emph{Keywords: }
Gini index, integer partition, dominance order, antichain, generating function.

\bigskip
\hrule
\bigskip

\noindent (Concerned with sequences
\seqnum{A076269}, and 
\seqnum{A337206}.)

\bigskip
\hrule
\bigskip
\end{document}